\documentclass[12pt]{amsart}

\usepackage{amsmath,amssymb,amsthm,fullpage}

\newtheorem{theorem}{Theorem}[section]

\newtheorem{corollary}[theorem]{Corollary}
\newtheorem{lemma}[theorem]{Lemma}

\newtheorem{proposition}[theorem]{Proposition}


\def\C{{\mathbb C}}

\def\N{{\mathbb N}}

\def\R{{\mathbb R}}
\def\Z{{\mathbb Z}}

\def\T{{\mathbb T}}

\def\supp{\textup{supp}}
\def\Res{\textup{Res}}

\begin{document}

\title[Heights of polynomials]{Heights of polynomials over lemniscates}

\author{Igor Pritsker}
\address{Department of Mathematics, Oklahoma State University, Stillwater, OK 74078, U.S.A.}
\email{igor@math.okstate.edu}

\date{}

\begin{abstract}
 We consider a family of heights defined by the $L_p$ norms of polynomials
with respect to the equilibrium measure of a lemniscate for $0 \le p \le \infty$, where
$p=0$ corresponds to the geometric mean (the generalized Mahler measure) and
$p=\infty$ corresponds to the standard supremum norm. This special choice of the
measure allows to find an explicit form for the geometric mean of a polynomial, and
estimate it via certain resultant. For lemniscates satisfying appropriate hypotheses, we establish explicit polynomials of
lowest height, and also show their uniqueness. We discuss relations between the standard
results on the Mahler measure and their analogues for lemniscates that include
generalizations of Kronecker's theorem on algebraic integers in the unit disk, as well as of
Lehmer's conjecture.
\end{abstract}

\subjclass[2010]{Primary 11R06, 12D10; Secondary 30C10, 30C15}

\keywords{Algebraic integers, heights, integer polynomials, integer Chebyshev problem, integer transfinite diameter, lemniscate, Mahler measure}

\dedicatory{Dedicated to the memory of  Peter Borwein}

\maketitle

\section{Polynomials of small height over lemniscates}

Let $\mu$ be a positive unit Borel measure with compact support $S=\supp\,\mu$ in the complex plane $\C.$ For any polynomial $P$,
one can define the standard $L_p(\mu)$ norms by setting
\begin{align}\label{Lp-norms}
  \|P\|_p := \left(\int |P|^p\,d\mu\right)^{1/p},\ 0<p<\infty, \quad\mbox{and}\quad \|P\|_\infty := \sup_S |P|,\ p=\infty.
\end{align}
The other endpoint $p=0$ of this range corresponds to the geometric mean with respect to the measure $\mu$:
\begin{align}\label{G-mean}
  \|P\|_0 = M(P) := \exp\left(\int \log|P|\,d\mu\right).
\end{align}
It is well known that these norms are subordinated as follows:
\begin{align}\label{sub}
  M(P) \le  \|P\|_p \le \|P\|_\infty,\quad 0<p<\infty,
\end{align}
where the first inequality is a consequence of Jensen's inequality \cite[pp. 138, 152]{HLP}, and the second one is immediate.

A particular choice of the measure $\mu$ in \eqref{Lp-norms} and \eqref{G-mean} certainly depends on the problem one wants to consider.
Perhaps the most classical and popular choice is the uniform distribution $d\mu(e^{i\theta}) = d\theta/(2\pi)$ on the unit
circumference $\T.$ This provides us with a very important tool in the study of algebraic numbers, namely with the Mahler measure.
For an arbitrary polynomial $P(z)=c_n\prod_{k=1}^{n}(z-z_k)$ with $c_n\neq 0,$ the Mahler measure is given by
\begin{align}\label{Mahler}
  M(P) := \exp\left(\frac{1}{2\pi}\int \log|P(e^{i\theta})|\,d\theta\right) = |c_n| \prod_{k=1}^{n} \max(1,|z_k|),
\end{align}
where the second equality is a well known consequence of Jensen's formula, see \cite{Bo} and \cite{EW} for background and applications.

The goal of this paper is to construct a natural family of polynomial heights over lemniscates that generalizes and includes the full range
of the above $L_p$ norms on the unit circle $\T.$ In many important cases, we solve the problem of finding integer polynomials of smallest
height over a lemniscate, and show their uniqueness. The generalized Mahler measure we introduce here is used to study algebraic integers on
and near lemniscates, and obtain results that directly correspond to Kronecker's theorem on algebraic integers in the unit disk \cite{Kr},
and other standard results related to the Mahler measure.

For a polynomial $V(z) = a_m \prod_{k=1}^m (z-\zeta_k) \in \C[z]$ with $a_m\neq 0$, and for any $r>0$, define its lemniscate as
\begin{align}\label{L}
L := \{z\in\C: |V(z)|=r\}.
\end{align}
We also use the filled-in lemniscate $E$, i.e., the union of lemniscate $L$ and its interior in $\C$:
\begin{align}\label{E}
E := \{z\in\C: |V(z)|\le r\}.
\end{align}
It is clear that if $r=0$ then both $L$ and $E$ degenerate to the set of zeros of $V$. If $r>0$ is sufficiently small, then $L$ consists of
analytic closed curves, with each curve surrounding a zero of $V$. In particular, if all zeros of $V$ are simple, then we have exactly
$m$ analytic non-intersecting components of $L$. When $r$ increases, those components monotonically increase in size because of the
Maximum Modulus Principle for $V$, and they eventually meet at the critical points of $V$. For sufficiently large $r$, the lemniscate $L$
is always a single analytic curve that approaches infinity as $r\to\infty.$ More details on the properties and geometry of lemniscates can
be found in \cite[pp. 17--22]{Wa}. Some of the most historically important lemniscates include Cassini ovals and the lemniscate of Bernoulli,
where both examples are defined by $V(z)=z^2-1$ with respective values $r\in(0,1)$ and $r=1.$ While lemniscates are rather special
curves, they can approximate essentially arbitrary geometric shapes by Hilbert's Lemniscate Theorem, see \cite[p. 158]{Ra}.

We now specify the measure $\mu$ in the definitions for the polynomial heights \eqref{Lp-norms} and \eqref{G-mean} as the equilibrium
measure of the lemniscate $L$ in the sense of logarithmic potential theory (see \cite{Ra} and  \cite{Ts}), which is a unit measure supported on $L$ expressing
the steady state distribution of charge if $L$ is viewed as conductor. This equilibrium measure is defined for general compact sets in potential
theory and complex analysis, and it already found many applications in number theory, e.g., in connection with asymptotic distribution of
algebraic numbers. For example, the equilibrium measure was used to define generalizations of the Weil height in \cite{Ru} and \cite{FR},
and of the Mahler measure in \cite{PrCr}. It is known that in our case the equilibrium measure of the lemniscate $L$ of \eqref{L} is given explicitly by
\begin{align}\label{mu}
  d\mu(z) = \frac{|V'(z)|}{2\pi m r}\,|dz|, \quad z\in L,
\end{align}
see \cite[p. 350]{To}. 
Thus the support of $\mu$ is $L$, and the heights we use in this paper
are defined by the integrals with respect to this measure in \eqref{Lp-norms} and \eqref{G-mean}. Note that the supremum norm in \eqref{Lp-norms}
is actually the maximum of $|P|$ over $L.$ It is immediately clear that for $V(z)=z$ and $r=1$ the above
measure reduces to the familiar $d\theta/(2\pi)$ on $\T.$ In fact, one does not need to know potential theory to understand the results
of this paper, which is primarily due to the direct generalization of \eqref{Mahler} in the explicit version of Jensen’s formula for lemniscates stated below.

\begin{proposition} \label{GMf}
Let $L$ be the lemniscate $\{z\in\C:|V (z)| = r\}$ defined by \eqref{L}. If $P(z)=c_n\prod_{k=1}^{n}(z-z_k) \in \C[z]$ is any polynomial
with $c_n\neq 0,$ then the generalized Mahler measure $M_L(P)$ with respect to the equilibrium measure $\mu$ of $L$ in \eqref{mu} is given by
\begin{align}\label{GM}
  M_L(P) = |c_n|\,|a_m|^{-n/m} \left(\prod_{k=1}^{n} \max(r,|V(z_k)|)\right)^{1/m}.
\end{align}
Furthermore, we have
\begin{align}\label{Res}
  M_L(P) \ge |a_m|^{-n/m}\,|\Res(P,V)|^{1/m},
\end{align}
where $\Res(P,V)$ is the resultant of $P$ and $V$.
\end{proposition}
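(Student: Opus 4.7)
The plan is to reduce the entire computation to the classical Jensen formula on a circle, using the natural $m$-to-$1$ covering $V\colon L\to\{|w|=r\}$. First I would factor $\log|P(z)|=\log|c_n|+\sum_{k=1}^n \log|z-z_k|$ and integrate termwise against $\mu$, reducing the problem to evaluating the single integral
\[
I(\alpha):=\int_L \log|z-\alpha|\,d\mu(z), \qquad \alpha\in\C,
\]
so that $\log M_L(P)=\log|c_n|+\sum_{k=1}^n I(z_k)$.

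Next I would change variables via $w=V(z)$. On $L$ we have $|V'(z)|\,|dz|=|dw|$, and away from the (finitely many) critical points of $V$ the map $V$ is an $m$-to-$1$ covering of the circle $\{|w|=r\}$; denoting the preimages by $\eta_1(w),\dots,\eta_m(w)$, the formula \eqref{mu} gives
\[
I(\alpha) = \frac{1}{2\pi m r}\int_{|w|=r}\sum_{j=1}^m \log|\eta_j(w)-\alpha|\,|dw|.
\]
The key algebraic identity is that $\eta_1(w),\dots,\eta_m(w)$ are precisely the roots of $V(z)-w$, so $V(\alpha)-w=a_m\prod_{j=1}^m(\alpha-\eta_j(w))$ and hence
\[
\sum_{j=1}^m \log|\alpha-\eta_j(w)| = \log|V(\alpha)-w|-\log|a_m|.
\]
Substituting this into the previous display and invoking the classical Jensen formula $\frac{1}{2\pi r}\int_{|w|=r}\log|w-\beta|\,|dw|=\log\max(r,|\beta|)$ with $\beta=V(\alpha)$, I obtain
\[
I(\alpha) = \frac{1}{m}\bigl(\log\max(r,|V(\alpha)|)-\log|a_m|\bigr).
\]
Summing over $\alpha=z_1,\dots,z_n$ and exponentiating yields exactly \eqref{GM}.

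The inequality \eqref{Res} then follows mechanically from $\max(r,|V(z_k)|)\ge |V(z_k)|$ combined with the standard resultant identity $\Res(P,V)=c_n^m\prod_{k=1}^n V(z_k)$, which gives $\prod_{k=1}^n\max(r,|V(z_k)|)\ge |\Res(P,V)|/|c_n|^m$; plugging this into \eqref{GM} eliminates $|c_n|$ and leaves the desired bound.

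The only real obstacle is technical: at critical values of $r$ the lemniscate $L$ passes through zeros of $V'$, so the change of variables is not a genuine $m$-sheeted cover. However, the critical points form a finite set, hence a set of $\mu$-measure zero (indeed $\mu$ vanishes there by \eqref{mu}), so the parametrization and summation over preimages are valid almost everywhere on $L$ and the integrals are unaffected. Alternatively, one may establish the identity for all $r$ outside the finite set of critical values of $|V|$ and extend by continuity in $r$, since both sides of \eqref{GM} depend continuously on $r>0$.
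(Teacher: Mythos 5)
Your argument is correct, and it reaches the key identity by a genuinely different route than the paper. The paper evaluates the logarithmic potential $\int\log|z-t|\,d\mu(t)$ by quoting standard potential theory: Frostman-type constancy of the equilibrium potential on $E$, the explicit capacity $\mathrm{cap}(L)=(r/|a_m|)^{1/m}$, and the Green function $g(z,\infty)=\frac{1}{m}\log\frac{|V(z)|}{r}$ for $\C\setminus E$, which together give $\int\log|z-t|\,d\mu(t)=\frac{1}{m}\log\frac{\max(r,|V(z)|)}{|a_m|}$ in one stroke. You instead derive this same formula from scratch: pushing the explicit density \eqref{mu} forward under the $m$-to-$1$ map $w=V(z)$ onto the circle $\{|w|=r\}$, using the factorization $V(\alpha)-w=a_m\prod_j(\alpha-\eta_j(w))$ to collapse the sum over sheets (note the symmetric sum makes any branch/monodromy issues irrelevant), and then invoking only the classical one-variable Jensen formula. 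Your handling of the critical points (a finite, hence $\mu$-null, exceptional set) is adequate, and your resultant step coincides with the paper's, via $|\Res(P,V)|=|c_n|^m\prod_k|V(z_k)|$ and $\max(r,|V(z_k)|)\ge|V(z_k)|$. What your approach buys is self-containment and transparency: it re-proves the needed potential-theoretic facts (in effect, the capacity and Green function formulas for lemniscates) by an elementary pushforward computation, at the cost of the covering-map technicalities; the paper's approach buys brevity and avoids those technicalities by citing Ransford, but presupposes the reader's familiarity with equilibrium potentials.
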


We observe once again that if $V(z)=z$ and $r=1$, then $L=\T$ and \eqref{GM} reduces to the regular Mahler measure \eqref{Mahler}.
It is also transparent from either \eqref{G-mean} or \eqref{GM} that the generalized version of the Mahler measure inherits many properties
of the classical one. For example, it is multiplicative, i.e., $M_L(PQ)=M_L(P)M_L(Q)$ for any pair of polynomials $P,Q.$ In all subsequent results,
we consider polynomials that are not identically zero, but one can also set $M_L(P)=1$ for $P\equiv 0$, following the standard agreement for the classical Mahler measure.

Problems on minimizing certain norms or heights by polynomials are abundant in mathematics in general and in number theory in particular.
A selection of extremal problems on polynomials with integer coefficients can be found, for example, in \cite{Bo}. We are primarily interested
in the problem of minimizing the heights \eqref{Lp-norms} and \eqref{G-mean}, defined by the measure $\mu$ of \eqref{mu} on lemniscates,
for polynomials with integer coefficients. This group of problems has a long history, going back to the work of Hilbert \cite{Hi} on
integer polynomials with small $L_2$ norm on an interval of the real line, some results of Schur \cite{Sch} and Fekete \cite{Fe} for the supremum
norm on general sets of the real line and the complex plane, and numerous applications, see, e.g., \cite{Bo}, \cite{Mo}, \cite{Fer}, \cite{Tr}, and references
therein. Despite this long history and extensive work, finding exact solutions and extremal polynomials is usually out of reach, and even providing
good estimates proved difficult. Nevertheless, we give the exact values for the lowest heights and exhibit the smallest polynomials, under the hypothesis that $V \in \Z[z]$ is irreducible, on many important classes of lemniscates.

\begin{theorem} \label{MinH}
  Let $L$ be defined by \eqref{L}, where $V(z) = a_m z^m + \ldots \in\Z[z]$ is irreducible and $0<r \le 1/|a_m|$.
  The smallest value of heights \eqref{Lp-norms} and \eqref{G-mean}, with respect to $\mu$ of \eqref{mu},
  for the family of all polynomials $P\in\Z[z]$ of degree  $\deg(P) \le km,\ k\in\N,$ such that $P\not\equiv 0$, is given by
\begin{align}\label{Lpmin}
  \inf\{\|P\|_p: P\in\Z[z],\ P\not\equiv 0,\ \deg(P) \le km\} = r^k, \ 0<p\le\infty,
\end{align}
and
\begin{align}\label{GMmin}
  \inf\{M_L(P): P\in\Z[z],\ P\not\equiv 0,\ \deg(P) \le km\} = r^k.
\end{align}

Furthermore, we have the following uniqueness results:\\
\textup{(i)} If $r<1/|a_m|$ then equality holds in \eqref{Lpmin} or \eqref{GMmin} if and only if $P(z)=\pm V^k(z)$;\\
\textup{(ii)} If $r=1/|a_m|<1$, then equality holds in \eqref{Lpmin} if and only if $P(z)=\pm V^k(z)$;\\
\textup{(iii)} For $|a_m|=r=1$, equality holds in \eqref{Lpmin} if and only if $P(z)=\pm V^k(z)$ or $P(z)=\pm 1$.
\end{theorem}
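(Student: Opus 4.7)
I would build the proof on Proposition~\ref{GMf}, the subordination chain \eqref{sub}, and induction on $k$. The upper bound is immediate: since $|V(z)| \equiv r$ on $L$, the polynomial $V^k \in \Z[z]$ has $\|V^k\|_p = r^k$ for every $p$, and via \eqref{sub} also $M_L(V^k) = r^k$, so both infima are at most $r^k$.

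\textbf{Lower bound.} I would prove $M_L(P) \ge r^k$ for every nonzero $P \in \Z[z]$ with $\deg P \le km$ by induction on $k$; by \eqref{sub} this transports to $\|P\|_p$ for all $p$. The base case $k=0$ says any nonzero integer $P$ satisfies $M_L(P) = |P| \ge 1$. For the inductive step, I would split on whether $V \mid P$ in $\Z[z]$; note that $V$, being irreducible in $\Z[z]$ of degree $m \ge 1$, is primitive, so by Gauss's lemma divisibility in $\Z[z]$ coincides with divisibility in $\Q[z]$. If $V \nmid P$, then $\Res(P,V)$ is a nonzero integer, and \eqref{Res} gives $M_L(P) \ge |a_m|^{-n/m}|\Res(P,V)|^{1/m} \ge |a_m|^{-n/m} \ge |a_m|^{-k} \ge r^k$, using $n \le km$, $|a_m| \ge 1$, and $r \le 1/|a_m|$. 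If $V \mid P$, I would write $P = VQ$ with $Q \in \Z[z]\setminus\{0\}$, $\deg Q \le (k-1)m$; since \eqref{GM} applied to $V$ itself yields $M_L(V) = r$ (the zeros of $V$ each contribute $\max(r,0) = r$), multiplicativity and the inductive hypothesis give $M_L(P) = r\, M_L(Q) \ge r \cdot r^{k-1} = r^k$.

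\textbf{Uniqueness and main obstacle.} For (i), the strict inequality $r < 1/|a_m|$ sharpens the estimate in the subcase $V \nmid P$ to $M_L(P) \ge |a_m|^{-k} > r^k$, so equality forces $V \mid P$, and the induction delivers $P = \pm V^k$. For (ii) and (iii), I would assume $\|P\|_p = r^k$ for some $0 < p \le \infty$; then $M_L(P) = r^k$, and the equality case of Jensen's inequality (immediate for $p = \infty$ by continuity of $|P|$ on $L = \supp \mu$) forces $|P| \equiv r^k$ on $L$. If $V \nmid P$ with $\deg P \ge 1$, then equality in \eqref{Res} via the explicit formula \eqref{GM} requires every zero of $P$ to lie outside the open interior of $E$, so $\log|P|$ is harmonic on $\mathrm{int}(E)$ with constant boundary value $\log r^k$; the maximum principle then yields $|P| \equiv r^k$ on $E$, making $P$ constant on an open set and hence globally, contradicting $\deg P \ge 1$. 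Thus either $P = \pm 1$ (viable only when $r = 1$, which accounts for the extra solutions in (iii)) or $V \mid P$, and the relations $|Q| \equiv r^{k-1}$ on $L$ and $M_L(Q) = r^{k-1}$ let the induction recurse. The hard part is precisely this last step in (ii)--(iii): the resultant bound \eqref{Res} is not strict, so uniqueness cannot be read off $M_L$ alone, and the combination of the Jensen-equality constraint with the max-modulus argument on $\mathrm{int}(E)$ is what rules out spurious extremals in the case $V \nmid P$.
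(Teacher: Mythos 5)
Your proof is correct, and its skeleton coincides with the paper's: the lower bound rests on the resultant estimate \eqref{Res} together with the integrality of $\Res(P,V)$ and the irreducibility of $V$ (the paper isolates this as Lemma~\ref{Reslem}), and uniqueness in (ii)--(iii) rests on the equality case of Jensen's inequality forcing $|P|\equiv r^k$ on $L$, followed by a maximum-principle argument. The differences are organizational and in the finishing step. The paper writes $P=V^dR$ with $d$ maximal (so $V\nmid R$) in one stroke and applies Lemma~\ref{Reslem} to $R$, whereas you peel off one factor of $V$ at a time by induction on $k$; these are interchangeable. In the equality case with $V\nmid P$, the paper evaluates $|R|$ at the zeros $\zeta_j$ of $V$ (strict maximum modulus inside $L$) and feeds $|\Res(R,V)|=|a_m|^{\deg R}\prod_j|R(\zeta_j)|<1$ back into integrality to conclude $\Res(R,V)=0$; you instead observe that the forced equality in \eqref{Res} places all zeros of $P$ outside $\{|V|<r\}$, so that $\log|P|$ is harmonic there with constant boundary values, whence $P$ is constant, a contradiction. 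Both routes are valid; the paper's reuses the resultant lemma and is a bit shorter, while yours should spell out the one-line chain $r^k=M_L(P)\ge |a_m|^{-n/m}|\Res(P,V)|^{1/m}\ge |a_m|^{-n/m}\ge |a_m|^{-k}=r^k$ that forces equality in \eqref{Res} --- you leave it implicit, but it is exactly your lower-bound computation, so this is a presentational point rather than a gap. Finally, note that your recursion in case (iii) can stop at $\pm 1$ at any level and therefore outputs $P=\pm V^j$ for every $0\le j\le k$; this is not an error on your part: such $P$ are genuinely extremal when $r=1$ (indeed $\|V^j\|_p=1=r^k$), and the paper's own argument produces the same list, even though the wording of (iii) records only the cases $j=0$ and $j=k$.
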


In the special case $V(z)=z$, this result states that the extremal polynomials for circles $L$ centered at the origin are
the monomials, which was certainly known before. For general lemniscates, Theorem \ref{MinH} is completely new for all $p\in[0,\infty)$.
If $p=\infty$ then \eqref{Lpmin} was already stated in Theorem 1.5 of \cite{Pr}, but the uniqueness of extremal polynomials part is new even
in this case. From the number theoretic point of view, the most interesting cases of Theorem \ref{MinH} are $p=\infty$ that corresponds to
the integer Chebyshev (or integer transfinite diameter) problem, and $p=0$ that corresponds to the generalized Mahler measure. A selection
of references on the integer Chebyshev problem include \cite{Am90}, \cite{Am93}, \cite{Bo}, \cite{BE}, \cite{BPP}, \cite{Fl}, \cite{FRS},
\cite{Mo}, \cite{Pr}, but we do not attempt to give a comprehensive up-to-date survey. A more or less general discussion of the integer
Chebyshev problem is contained in \cite{Pr}. The most studied case of the integer Chebyshev problem is related to the interval $[0,1]\subset\R,$
however even this instance of the problem remains open to a large extent. As far as
we know, the only non-trivial case when the integer Chebyshev problem is explicitly solved (by finding the smallest value of the supremum norm on
the corresponding set and exhibiting the extremal integer polynomials) is contained in Theorem \ref{MinH}. It is interesting that in the latter case
the polynomials of the smallest supremum norm are unique (up to a change of sign, of course). This is not true for the integer Chebyshev problem
on $[0,1]$, see \cite[p. 665]{BE} for an example of two different extremal polynomials of degree 4. While Theorem \ref{MinH} does not give uniqueness results for the polynomials $P$ minimizing $M_L(P)$ when $r = |a_m| = 1$ in (ii) and (iii), we fill this gap below in Theorem \ref{GKr}. It is certainly of interest to study the remaining cases that are not covered by Theorem \ref{MinH} and other results of this paper, e.g., when $r = |a_m| > 1$.

For $r/|a_m| > 1$, the problem can be handled by purely analytic methods, and we find that the smallest height polynomials on a lemniscate defined by any polynomial $V$ with complex coefficients are $P(z)=\pm 1$, as stated below in Proposition \ref{Llarge}. One can easily see from \eqref{GM} that
\begin{align}\label{GMlb}
  M_L(P) \ge |c_n|\,\left(\frac{r}{|a_m|}\right)^{n/m} \ge \left(\frac{r}{|a_m|}\right)^{n/m},
\end{align}
where $P(z)=c_n z^n + \ldots\in\Z[z],\ c_n\neq 0.$ Combining this with \eqref{sub}, we obtain the following result:

\begin{proposition} \label{Llarge}
  Let $L$ be defined by \eqref{L}, where $V(z) = a_m z^m + \ldots \in\C[z]$ and $r/|a_m| \ge 1$.
  The smallest value of heights \eqref{Lp-norms} and \eqref{G-mean}, with respect to $\mu$ of \eqref{mu}, for the family of all polynomials $P\in\Z[z]$ such that $P\not\equiv 0$, is given by
\begin{align}\label{LpGMmin}
  \inf M_L(P) = \inf \|P\|_p = 1.
\end{align}
For $r/|a_m| > 1$, equality holds in \eqref{LpGMmin} if and only if $P(z)=\pm 1$.
\end{proposition}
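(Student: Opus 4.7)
The plan is to combine the lower bound \eqref{GMlb} with the subordination inequalities \eqref{sub} and the integrality of the leading coefficient. Writing $P(z) = c_n z^n + \ldots \in \Z[z]$ with $c_n \neq 0$, so $|c_n| \ge 1$, the formula \eqref{GM} together with $\max(r,|V(z_k)|) \ge r$ already yields
\[
  M_L(P) \ge |c_n|\left(\frac{r}{|a_m|}\right)^{n/m} \ge \left(\frac{r}{|a_m|}\right)^{n/m} \ge 1
\]
under the hypothesis $r/|a_m| \ge 1$. Invoking \eqref{sub} then propagates this lower bound: $\|P\|_p \ge M_L(P) \ge 1$ for $0 < p < \infty$, and $\|P\|_\infty \ge \|P\|_p \ge 1$. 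Since $P \equiv \pm 1$ gives $M_L(P) = \|P\|_p = 1$ for every $p$, this settles \eqref{LpGMmin}.

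For the uniqueness assertion when $r/|a_m| > 1$, I would argue as follows. Suppose $P \in \Z[z]$, $P\not\equiv 0$, achieves equality in \eqref{LpGMmin} for one of the heights. By \eqref{sub} we then have $M_L(P) \le 1$ regardless of which height attains the minimum. Chaining this against the lower bound above gives
\[
  1 \ge M_L(P) \ge |c_n|\left(\frac{r}{|a_m|}\right)^{n/m}.
\]
If $n \ge 1$, the strict inequality $r/|a_m| > 1$ forces $(r/|a_m|)^{n/m} > 1$, contradicting $M_L(P) \le 1$. Hence $n = 0$, i.e., $P$ is a constant $c_0 \in \Z\setminus\{0\}$, and then $M_L(P) = \|P\|_p = |c_0| = 1$ immediately yields $P = \pm 1$.

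There is essentially no obstacle here beyond bookkeeping: the proposition is a direct corollary of \eqref{GM}, \eqref{sub}, and $|c_n|\ge 1$. The only mild subtlety is handling all values of $p\in[0,\infty]$ uniformly, but the subordination \eqref{sub} reduces everything to the single estimate on $M_L(P)$, and the case $p = 0$ needs no separate treatment since $\|P\|_0 = M_L(P)$ by definition \eqref{G-mean}. Note also that the hypothesis $V \in \C[z]$ (rather than $\Z[z]$) and the absence of an irreducibility assumption are harmless: the only arithmetic input used is that the leading coefficient of $P$ is a nonzero integer.
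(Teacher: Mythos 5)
Your proof is correct and follows essentially the same route as the paper: the chain $\|P\|_p \ge M_L(P) \ge |c_n|\,(r/|a_m|)^{n/m} \ge 1$ from \eqref{sub} and \eqref{GMlb}, equality at $P=\pm 1$, and the observation that strict inequality $r/|a_m|>1$ rules out any nonconstant extremal polynomial, forcing $P=\pm 1$.
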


We give a detailed analysis for the polynomials of lowest height is the sense of the generalized Mahler measure in the next section. Section \ref{Proofs}
contains proofs of all results.

\section{Generalized Mahler measure and algebraic integers}

The importance of the Mahler measure \eqref{Mahler} in number theory is primarily related to its role in the study of algebraic numbers on and near the unit
circle $\T$, see \cite{Bo}, \cite{EW}, \cite{Sm1}, \cite{Sm2}, etc. We show in this section that the generalized Mahler measure can be used in the same
fashion to understand the distribution of algebraic numbers on or near lemniscates.

It is immediate from \eqref{Mahler} that the (classical) Mahler measure satisfies $M(P)\ge 1$ for any $P\in\Z[z],\ P\not\equiv 0,$
with equality holding if and only if $\pm P$ is monic
and has all roots in the closed unit disk. Since the Mahler measure is multiplicative, we obtain that $M(P)=1$ iff $M(Q)=1$ for every irreducible
factor of $P$. Hence we can restrict our attention to irreducible polynomials. Kronecker's theorem \cite{Kr} now gives for a monic irreducible $P\in\Z[z]$ that $M(P)=1$  if and
only if either $P$ is cyclotomic or $P(z)=z.$ In particular, the polynomials of smallest Mahler measure are not unique in this case. Note that Theorem \ref{MinH} does not
provide uniqueness information as $V(z)=z$ with $a_m=r=1$ here, but we fill this gap below.

Suppose that the polynomial $V$ defining the lemniscate $L$ in \eqref{L} is monic, but not necessarily irreducible. Following the same argument as above in the generalized Mahler measure case, we observe from \eqref{GM} and \eqref{GMlb} that $M_L(P) \ge r^{n/m}$ for any $P\in\Z[z],\ P\not\equiv 0,\ \deg(P)=n.$ Moreover, $M_L(P) = r^{n/m}$ if and only if $\pm P$ is a monic polynomial with all roots in the filled-in lemniscate $E$ defined by \eqref{E}. Since the generalized Mahler measure is multiplicative, we can restrict the argument to
irreducible polynomials, as before. The following version of Kronecker's theorem holds for $M_L(P)$ on the lemniscate $L$ of a polynomial $V$, which is stated in terms of the composition $\Phi\circ V$ for a cyclotomic polynomial $\Phi.$

\begin{theorem} \label{GKr}
Let $L$ be defined by \eqref{L}, where $V(z) = z^m + \ldots \in\Z[z]$ is monic, and $r = 1$.
The generalized Mahler measure \eqref{GM} satisfies
\begin{align}\label{GMmon}
  M_L(P) \ge 1,\quad P\in\Z[z],\ P\not\equiv 0.
\end{align}
Equality is attained above if and only if $P$ has leading coefficient $\pm 1$, and all roots of $P$ are located in $E$ defined by \eqref{E}.

More precisely, we have $M_L(P)=1$ for a monic irreducible $P\in\Z[z]$ if and only if either $P\mid V$ or $P\mid \Phi\circ V$ for a cyclotomic polynomial $\Phi.$ Hence if $\alpha$ is an algebraic integer contained in $E$ together with all of its conjugates, then we have two possibilities:\\
\textup{(i)} $\alpha$ is a root of $V$ when $\alpha\in E^\circ$;\\
\textup{(ii)} $\alpha$ is a root of $\Phi\circ V$, for a cyclotomic polynomial $\Phi,$ when $\alpha\in L$.
\end{theorem}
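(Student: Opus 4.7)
The plan is to handle the three assertions of Theorem~\ref{GKr} in order: first the bound $M_L(P)\ge 1$ with its equality characterization, then the description of monic irreducible $P$ achieving equality, and finally the corollary about algebraic integers. The first part is essentially bookkeeping: since $a_m=1$ and $r=1$, formula \eqref{GM} from Proposition~\ref{GMf} collapses to $M_L(P)=|c_n|\bigl(\prod_k \max(1,|V(z_k)|)\bigr)^{1/m}$, and for $P\in\Z[z]$ with $P\not\equiv 0$ we have $|c_n|\ge 1$ while each factor in the product is $\ge 1$. Equality forces $|c_n|=1$ and $|V(z_k)|\le 1$ for every root $z_k$, i.e.\ all roots lie in $E$; multiplicativity of $M_L$ then lets me restrict attention to monic irreducible $P$.

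The heart of the argument is to reduce the lemniscate version of Kronecker's theorem to the classical one by looking at $V(\alpha)$. Let $P\in\Z[z]$ be monic irreducible with $M_L(P)=1$, and let $\alpha$ be a root of $P$ with full set of conjugates $\alpha_1,\dots,\alpha_n$. The key observation is that $V(\alpha)$ is itself an algebraic integer (since $V\in\Z[z]$), and for any $\sigma\in\mathrm{Gal}(\overline{\Q}/\Q)$ one has $\sigma(V(\alpha))=V(\sigma(\alpha))=V(\alpha_j)$ for some $j$. Thus every Galois conjugate of $V(\alpha)$ is $V(\alpha_j)$ for some conjugate $\alpha_j$ of $\alpha$, and because $\alpha_j\in E$ by the equality hypothesis, $|V(\alpha_j)|\le r=1$. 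Classical Kronecker's theorem \cite{Kr} then forces $V(\alpha)$ to be either $0$ or a root of unity. In the first case $\alpha$ is a root of $V$, so the minimal polynomial $P$ of $\alpha$ divides $V$ in $\Z[z]$ (monic divisibility in $\Q[z]$ transfers to $\Z[z]$). In the second case, letting $\Phi$ be the cyclotomic polynomial with $\Phi(V(\alpha))=0$, one concludes $\Phi\circ V$ vanishes at $\alpha$, so $P\mid \Phi\circ V$. The converse direction — that roots of $V$ and of $\Phi\circ V$ all lie in $E$ — is immediate since $|V(z)|=0$ in the first case and $|V(z)|=|\zeta|=1$ for $\zeta$ a root of unity in the second.

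Finally, for the statement about an algebraic integer $\alpha\in E$ whose conjugates also all lie in $E$, I simply apply the previous paragraph to its minimal polynomial $P$. If $P\mid V$ then $\alpha$ is a root of $V$, covering case (i); if $P\mid \Phi\circ V$ then $|V(\alpha)|=1$, so $\alpha\in L$, covering case (ii). These two options are mutually exclusive precisely because a root of $V$ satisfies $|V(\alpha)|=0\ne 1$, so the dichotomy $\alpha\in E^\circ$ versus $\alpha\in L$ forces exactly one alternative in each regime.

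There is no genuine technical obstacle here; the only step that requires care is the Galois-theoretic argument identifying the conjugates of $V(\alpha)$, which is what lets classical Kronecker do the work. The rest is straightforward manipulation of the explicit Jensen formula \eqref{GM} and minimal-polynomial divisibility.
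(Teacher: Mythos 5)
Your proof is correct and takes essentially the same route as the paper: both reduce the lemniscate statement to the classical Kronecker theorem applied to the algebraic integers $V(\alpha_k)$, which have all conjugates in the closed unit disk, and then transfer the conclusion back via divisibility by $V$ or $\Phi\circ V$. The only cosmetic difference is that you certify the hypothesis of Kronecker's theorem through the Galois action on $V(\alpha)$, whereas the paper constructs the monic integer polynomial $Q(w)=\prod_{k}\bigl(w-V(\alpha_k)\bigr)$ via symmetric functions; these are two standard phrasings of the same step.
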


One interesting consequence of this result is the description of all complete sets of conjugate algebraic integers located on a lemniscate satisfying conditions of Theorem \ref{GKr}. The fact that we have infinitely many complete sets of conjugate algebraic integers was already observed by Fekete and Szeg\H{o} in \cite[p. 162]{FS},
in the form of roots for the equation $V^n(z)=1,\ n\in\N,$ but they did not give a full description of such sets on a lemniscate.

\begin{corollary} \label{AlgInt}
  Let $L$ be defined by \eqref{L}, where $V(z) = z^m + \ldots \in\Z[z]$ and $0< r \le 1$.\\
\textup{(i)} If $0<r<1$ then there are no complete sets of conjugate algebraic integers located on $L$;\\
\textup{(ii)} For $r=1$, $\{\alpha_k\}_{k=1}^n\subset L$ is a complete set of conjugate algebraic integers if and only if $\{\alpha_k\}_{k=1}^n$
is a (complete conjugate) subset of the roots of $\Phi\circ V$, where $\Phi$ is a cyclotomic polynomial.
\end{corollary}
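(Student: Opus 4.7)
The plan is to treat the two parts separately, with (i) following from a resultant integrality argument and (ii) reducing directly to Theorem \ref{GKr}.

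For part (i), I would argue by contradiction. Suppose $\{\alpha_k\}_{k=1}^n \subset L$ is a complete set of conjugate algebraic integers, and let $P \in \Z[z]$ be their minimal polynomial, which is monic and irreducible. Since each $\alpha_k$ lies on $L$, one has $|V(\alpha_k)| = r$ for all $k$. Using that $P$ and $V$ are both monic, the resultant identity gives
\begin{align*}
|\Res(P,V)| = \prod_{k=1}^n |V(\alpha_k)| = r^n.
\end{align*}
On the other hand, $\Res(P,V)$ is a rational integer because $P, V \in \Z[z]$, and it is nonzero: every root of $P$ lies on $L$ where $|V| = r > 0$, so no root of $P$ is a root of $V$, and the irreducibility of $P$ then forces $\gcd(P,V) = 1$. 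Hence $|\Res(P,V)| \ge 1$, giving $r^n \ge 1$, which contradicts $0 < r < 1$ since $n \ge 1$.

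For the forward direction of part (ii), let $P \in \Z[z]$ again denote the minimal polynomial of $\{\alpha_k\}_{k=1}^n$, which is monic and irreducible. Since $|V(\alpha_k)| = 1 = r$ for every $k$, the explicit formula \eqref{GM} yields
\begin{align*}
M_L(P) = \left(\prod_{k=1}^n \max(1, |V(\alpha_k)|)\right)^{1/m} = 1.
\end{align*}
Theorem \ref{GKr} then forces either $P \mid V$ or $P \mid \Phi \circ V$ for some cyclotomic polynomial $\Phi$. The first option is excluded, because $P \mid V$ would give $V(\alpha_k) = 0$, contradicting $|V(\alpha_k)| = 1$. Therefore $\{\alpha_k\}$ is a complete conjugate subset of the roots of some $\Phi \circ V$.

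The reverse direction of part (ii) is a direct verification: if $\alpha$ is a root of $\Phi \circ V$ for a cyclotomic $\Phi$, then $V(\alpha)$ is a root of unity, so $|V(\alpha)| = 1$ and $\alpha \in L$; moreover $\Phi \circ V \in \Z[z]$ is monic, so every such $\alpha$ is an algebraic integer. Consequently any complete conjugate subset of the roots of $\Phi \circ V$ sits inside $L$ and consists of algebraic integers. The serious work in this corollary is carried by Theorem \ref{GKr}; the only genuinely new ingredient is the resultant lower bound in (i), and the main thing to watch is ruling out the spurious possibility $P \mid V$ in (ii) by noting that roots of $V$ fail to lie on $L$ when $r > 0$.
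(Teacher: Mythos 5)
Your proof is correct and follows essentially the paper's route: part (i) rests on the integrality of the resultant $\Res(P,V)$ of the minimal polynomial with $V$, and part (ii) reduces to Theorem \ref{GKr}. The only (harmless) difference is that in (i) you evaluate $|\Res(P,V)|=r^n$ directly and note it is a nonzero integer, whereas the paper invokes Lemma \ref{Reslem} to get $\Res(P,V)=0$, hence $P\mid V$, and then contradicts the location of the roots on $L$; you also spell out the easy converse in (ii), which the paper leaves to the proof of Theorem \ref{GKr}.
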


The problem of describing complete sets of conjugate algebraic integers located in a given set is another old question that remains open in many basic cases. For example, it is known from the same work of Kronecker \cite{Kr} that complete sets of conjugates in an interval $[a,a+4],$ with $a\in\Z$, are roots of the scaled Chebyshev polynomial $2T_n((x-a)/2)$. However, if $a\not\in\Z$ then the description of complete sets of conjugates is not known, nor it is known whether there are infinitely many such sets in $[a,a+4]$. For more information on this problem see \cite{Ro62}, \cite{Ro64}, \cite{Ro69} and the survey \cite{Mc}, where one can find many additional references.

\medskip
Our next logical step is an analogue of the celebrated Lehmer's conjecture. The original version appeared in \cite{Le}, and it is very well known and thoroughly studied, see \cite{Bo}, \cite{EW}, \cite{Sm1}, \cite{Sm2} for references. Since $M(P) \ge 1$ for any $P\in\Z[z],\ P\not\equiv 0,$ Lehmer was interested in a natural question
on how close the Mahler measure of a non-cyclotomic polynomial can be to $1$. In fact, he observed from computations that the smallest such measure seems to be
achieved by the polynomial $\mathcal{L}(z) =  z^{10} + z^9 - z^7 - z^6 - z^5 - z^4 - z^3 + z + 1$:
\[
M(P) \ge M(\mathcal{L}) \approx 1.176280818,
\]
where $P\in\Z[z]$ and $M(P)\neq 1.$ We do not attempt to prove the original Lehmer's conjecture, but instead relate it to an analogous statement for lemniscates. Theorem
\ref{GKr} gives that $M_L(P) \ge 1,\ P\in\Z[z],\ P\not\equiv 0,$ essentially as in the classical case $L=\T$, and also explains when the value 1 is achieved. Thus we are led to
the question on the greatest lower bound for the generalized Mahler measure of all polynomials $P$ with $M_L(P)>1$:
\begin{align}\label{GLehmer}
  B_L := \inf\{M_L(P): P\in\Z[z],\ M_L(P)>1\},
\end{align}
where $L$ is defined by \eqref{L}, with $V(z) = z^m + \ldots \in\Z[z]$ and $r = 1$. One can rephrase the original Lehmer's question as whether $B_\T = M(\mathcal{L}),$
or, in a weaker form, whether $B_\T>1$ (with $V(z)=z$). We show that $B_\T>1$ is equivalent to $B_L>1,$ i.e., the weaker form of Lehmer's conjecture holds if and only if its analogue holds
for any lemniscate.

\begin{theorem} \label{L-GL}
  Let $L$ be defined by \eqref{L}, with $V(z) = z^m + \ldots \in\Z[z]$ and $r = 1$. Then
\begin{align}\label{LGL}
(B_\T)^{1/m} \le  B_L \le B_\T.
\end{align}
\end{theorem}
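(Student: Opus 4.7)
The plan is to establish the two bounds separately, each by a short and direct argument based on the explicit formula \eqref{GM} together with a natural operation relating integer polynomials on the unit circle $\T$ to integer polynomials on $L$. For the upper bound, I would pass from a classical polynomial $Q$ on $\T$ to its composition $Q \circ V$ on $L$; for the lower bound, I would go in the reverse direction via the resultant $\Res_z(P(z), V(z) - w)$, which produces a polynomial in $w$ whose classical Mahler measure matches $M_L(P)^m$.

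For the upper bound $B_L \le B_\T$, suppose $Q(w) = c_n \prod_{k=1}^n (w - w_k) \in \Z[w]$ is any polynomial with $M(Q) > 1$, and set $P := Q \circ V \in \Z[z]$. Since $V$ is monic of degree $m$, $P$ has leading coefficient $c_n$, and its roots are exactly the preimages $\{z_{k,j}\}_{j=1}^m$ of each $w_k$ under $V$. Applying \eqref{GM} with $a_m = 1$ and $r = 1$, and using $|V(z_{k,j})| = |w_k|$, one gets $M_L(P) = |c_n| \prod_{k=1}^n \max(1,|w_k|) = M(Q)$. Since $M_L(P) > 1$, this $P$ is admissible in the infimum defining $B_L$, so $B_L \le M(Q)$; taking the infimum over $Q$ yields $B_L \le B_\T$.

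For the lower bound $(B_\T)^{1/m} \le B_L$, let $P(z) = c_n \prod_{i=1}^n (z - z_i) \in \Z[z]$ satisfy $M_L(P) > 1$, and define
\[
Q(w) := \Res_z\bigl(P(z), V(z) - w\bigr) \in \Z[w].
\]
Integrality of $Q$ follows from the determinantal formula for the resultant, since the coefficients of $V(z) - w$ in $z$ lie in $\Z[w]$. Because $V(z) - w$ is monic in $z$, the product formula for the resultant gives $Q(w) = c_n^m \prod_{i=1}^n (V(z_i) - w) = \pm c_n^m \prod_{i=1}^n (w - V(z_i))$, so $\deg Q = n$ and the classical Mahler measure is $M(Q) = |c_n|^m \prod_{i=1}^n \max(1, |V(z_i)|)$. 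Comparing with \eqref{GM} at $a_m = 1$, $r = 1$, this equals $M_L(P)^m$. Since $M(Q) = M_L(P)^m > 1$, we have $M(Q) \ge B_\T$, hence $M_L(P) \ge B_\T^{1/m}$; infimizing over $P$ delivers $B_L \ge B_\T^{1/m}$.

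The whole argument rests on the identity $M_L(Q \circ V) = M(Q)$ and its dual $M(\Res_z(P, V - w)) = M_L(P)^m$; both reduce immediately to \eqref{GM} once the roots and leading coefficients are tracked. There is no genuine analytic obstacle. The only small thing to be careful about is the integrality and nondegeneracy of the resultant construction in the lower bound, and verifying that $M(Q) > 1$ holds strictly (which it does, since $M(Q) = M_L(P)^m$ and $M_L(P) > 1$).
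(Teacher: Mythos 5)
Your proof is correct and takes essentially the same route as the paper: the upper bound via the identity $M_L(Q\circ V)=M(Q)$ for $P=Q\circ V$, and the lower bound via the push-forward polynomial $\pm c_n^m\prod_{i}(w-V(z_i))$, whose classical Mahler measure equals $M_L(P)^m$. The only difference is cosmetic: the paper builds this polynomial directly and verifies its integrality through symmetric functions of the roots, while you obtain it, with integrality for free, as the resultant $\Res_z(P(z),V(z)-w)$ -- a slightly cleaner packaging of the same construction.
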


Thus $B_\T>1$ implies $B_L>1$ for all lemniscates $L$ satisfying the above assumptions, and $B_L>1$ for a single such lemniscate implies $B_\T>1$. It is clear from
the proof of Theorem \ref{L-GL} that most of known lower bounds for the Mahler measure can be translated into the corresponding analogues for the generalized Mahler
measure.

\section{Proofs} \label{Proofs}

\begin{proof}[Proof of Proposition \ref{GMf}]
We obtain from \eqref{G-mean} that
\begin{align}\label{GMcalc}
  \log M_L(P) &= \int \log|P|\,d\mu = \log|c_n| + \sum_{k=1}^{n} \int \log|t-z_k|\,d\mu(t),
\end{align}
where we used that the equilibrium measure $\mu$ defined in \eqref{mu} is a unit measure. We recall some
well known properties of the equilibrium potential $\int \log|t-z|\,d\mu(t),$ such as \cite[p. 59]{Ra}
\[
\int \log|t-z|\,d\mu(t) = \log \textup{cap}(L),\quad z\in E,
\]
where $\textup{cap}(L)$ is the logarithmic capacity of $L$ known explicitly \cite[pp. 134--135]{Ra} as
\[
\textup{cap}(L) = \left(\frac{r}{|a_m|}\right)^{1/m}.
\]
Another important fact we need is the connection between the equilibrium potential and the Green function $g(z,\infty)$ for the complement of $E$ with logarithmic pole at infinity \cite[p. 107]{Ra}:
\[
\int \log|z-t|\,d\mu(t) = g(z,\infty) + \log \textup{cap}(L),\quad z\in\C\setminus  E.
\]
Since the Green function is also known explicitly in this case \cite[p. 134]{Ra} as
\[
g(z,\infty) = \frac{1}{m} \log\frac{|V(z)|}{r},\quad z\in\C\setminus  E,
\]
we can summarize these findings in the following formula for the equilibrium potential:
\begin{align*}
\int \log|z-t|\,d\mu(t) &= \left\{
                                   \begin{array}{ll}
                                     \frac{1}{m} \log\frac{r}{|a_m|}, & \hbox{$z\in E$;} \\
                                     \frac{1}{m} \log\frac{|V(z)|}{|a_m|}, & \hbox{$z\in\C\setminus E$.}
                                   \end{array}
                                 \right. \\
&= \frac{1}{m} \log\frac{\max(r,|V(z)|)}{|a_m|}, \quad z\in\C.
\end{align*}
Applying the latter explicit evaluation of the equilibrium potential in \eqref{GMcalc}, and passing to the exponential form, we verify \eqref{GM}.

It is immediate from \eqref{GM} that
\begin{align*}
  M_L(P) &= |c_n|\,|a_m|^{-n/m} \left(\prod_{k=1}^{n} \max(r,|V(z_k)|)\right)^{1/m} \\
&\ge |a_m|^{-n/m} \left( |c_n|^m \prod_{k=1}^{n} |V(z_k)|)\right)^{1/m} = |a_m|^{-n/m}\,|\Res(P,V)|^{1/m},
\end{align*}
so that \eqref{Res} is also proved.
\end{proof}

We state a simple fact needed in the proof of  Theorem \ref{MinH}.

\begin{lemma} \label{Reslem}
Let $L$ be defined by \eqref{L} with $V(z)=a_m z^m + \ldots \in \Z[z],\ a_m\neq 0.$\\
If $Q\in\Z[z],\ \deg(Q)=l\in\N,$ satisfies
\[
|a_m|^l (M_L(Q))^m < 1,
\]
then $\Res(Q,V)=0$. If, in addition, $V$ is irreducible, then $V\mid Q.$
\end{lemma}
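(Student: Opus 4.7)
The plan is to reduce the statement directly to the resultant inequality \eqref{Res} of Proposition \ref{GMf}. Applied to $Q$ of degree $l$, that inequality rearranges (raising to the $m$-th power) to
\[
|\Res(Q,V)| \le |a_m|^l\,(M_L(Q))^m,
\]
so the hypothesis $|a_m|^l(M_L(Q))^m<1$ gives $|\Res(Q,V)|<1$. The crucial observation is integrality: because $Q,V\in\Z[z]$, the resultant $\Res(Q,V)$ is an integer (for example via its Sylvester determinant expression as an integer polynomial in the coefficients of $Q$ and $V$), and an integer of absolute value strictly less than $1$ must vanish. This yields $\Res(Q,V)=0$ and proves the first assertion.

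For the second assertion, $\Res(Q,V)=0$ with both polynomials of positive degree is equivalent to $Q$ and $V$ sharing a common root in $\C$, hence a common factor of positive degree in $\Q[z]$. If $V$ is irreducible in $\Z[z]$ then it is primitive and, by Gauss's lemma, irreducible in $\Q[z]$; the only positive-degree factor of $V$ over $\Q$ is therefore a nonzero scalar multiple of $V$ itself, so $V\mid Q$ in $\Q[z]$. To upgrade this to divisibility in $\Z[z]$, I would write $Q=Vf$ with $f\in\Q[z]$, express $f=(a/b)f_0$ with $\gcd(a,b)=1$ and $f_0\in\Z[z]$ primitive, and then equate contents on both sides of $bQ=a\,Vf_0$. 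Since $V$ is primitive, Gauss's lemma gives the content of $Vf_0$ equal to $1$, forcing $b=1$ and so $f\in\Z[z]$.

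The argument is essentially a one-line deduction from Proposition \ref{GMf} combined with the integrality of the resultant, and there is no real obstacle. The only mildly technical ingredient is the promotion of $\Q[z]$-divisibility to $\Z[z]$-divisibility via Gauss's lemma, which is standard and could even be left to the reader.
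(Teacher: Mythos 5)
Your proof is correct and follows essentially the same route as the paper: apply the resultant bound \eqref{Res} to $Q$, note $|\Res(Q,V)|\le |a_m|^l (M_L(Q))^m<1$ forces the integer $\Res(Q,V)$ to vanish, and then deduce $V\mid Q$ from irreducibility. The only difference is that the paper dismisses the final divisibility claim as ``clear,'' whereas you spell out the passage from a common root to $\Q[z]$-divisibility and then to $\Z[z]$-divisibility via primitivity and Gauss's lemma, which is a harmless (and correct) elaboration.
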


\begin{proof}
It follows from \eqref{Res} that
\begin{align*}
  |\Res(Q,V)| \le |a_m|^l (M_L(Q))^m < 1.
\end{align*}
Since $Q,V\in\Z[z]$, we have that $\Res(Q,V)\in\Z,$ and therefore $\Res(Q,V)=0.$ The last claim is clear.
\end{proof}

\begin{proof}[Proof of Theorem \ref{MinH}]
Note that if $P=V^k,\ k\in\N,$ then $\|P\|_p=M_L(P)=r^k.$  Thus to prove \eqref{Lpmin} and \eqref{GMmin} we need to show that $\|P\|_p<r^k$ or
$M_L(P)<r^k$ is not possible for $P\in\Z[z],\ P\not\equiv 0,\ \deg(P)=n \le km.$ If $\|P\|_p<r^k$ then $M_L(P)<r^k$ by \eqref{sub}, so that we assume
the latter inequality holds and reach a contradiction. For that purpose, suppose that $P=V^d R$, where $d\ge 0$ and $R\in\Z[z],\ V\nmid R,\ \deg(R) = n-dm \le (k-d)m.$
Observe that $d<k$ here, because otherwise $P=cV^k,\ |c|\in\N,$ and $\|P\|_p=M_L(P)=|c|r^k\ge r^k$. Since $M_L(P) = M_L(V^d) M_L(R) = r^d M_L(R)$
and $M_L(P)<r^k$ by our assumption, we conclude that $M_L(R)<r^{k-d}.$ But $|a_m| r \le 1$ gives that
\[
|a_m|^{n-dm} (M_L(R))^m \le |a_m|^{(k-d)m} (M_L(R))^m < (|a_m| r)^{(k-d)m} \le 1.
\]
It follows from Lemma \ref{Reslem} with $Q=R$ that $V\mid R$, contradicting our assumption.

We now turn to the proof of uniqueness statements (i)-(iii).

\textit{Proof of (i)} Note first that $\|P\|_p=r^k$ implies $M_L(P)=r^k$ by \eqref{sub} and \eqref{GMmin}. If $|a_m| r < 1$ and $M_L(P)=r^k$,
then we let $P=V^d R$, where $R\in\Z[z],\ V\nmid R,\ \deg(R) = n-dm \le (k-d)m,$ as in the first part of this proof. This yields $M_L(P) = M_L(V^d) M_L(R) = r^d M_L(R)$
and $M_L(R)=r^{k-d}.$ If $d<k$ then $|a_m| r < 1$ gives that
\[
|a_m|^{n-dm} (M_L(R))^m \le |a_m|^{(k-d)m} (M_L(R))^m = (|a_m| r)^{(k-d)m} < 1,
\]
and Lemma \ref{Reslem} leads us to the contradiction $V\mid R$. Hence $d=k$ and $R$ is a constant polynomial, which must be $\pm 1$ to satisfy $M_L(P)=r^k$.

\textit{Proof of (ii)} Here we have $|a_m| r = 1,\ r<1$ and $\|P\|_p=r^k,\ 0<p\le\infty.$ If $M_L(P) < \|P\|_p = r^k$ then we proceed in essentially the same way as
before by assuming that $P=V^d R$, where $R\in\Z[z],\ V\nmid R,\ \deg(R) = n-dm \le (k-d)m.$ Since $M_L(P) = M_L(V^d) M_L(R) = r^d M_L(R)$
and $M_L(P)<r^k,$ we obtain that $M_L(R)<r^{k-d}$ and $d<k.$ This gives again that
\[
|a_m|^{n-dm} (M_L(R))^m \le |a_m|^{(k-d)m} (M_L(R))^m < (|a_m| r)^{(k-d)m} = 1.
\]
Applying Lemma \ref{Reslem}, we obtain that $V\mid R$, contradicting our assumption.

It remains to handle the case when $M_L(P)=\|P\|_p$. From the equality case in Jensen's inequality, see \cite[p. 138]{HLP},
we know that $M_L(P)=\|P\|_p$ is possible if and only if $|P(z)|$ is constant for all $z\in L.$
The latter means that $|P(z)|=r^k,\ z\in L,$ and $P$ is not a constant, as $r<1.$ Once again, we set $P=V^d R$, where $R\in\Z[z],\ V\nmid R,\ \deg(R) = n-dm \le (k-d)m,$
which implies $|R(z)|=r^{k-d},\ z\in L.$ Since the roots of $V$ denoted by $\{\zeta_j\}_{j=1}^m$ are all located inside $L$,
the Maximum Modulus Principle implies that $|R(\zeta_j)| < r^{k-d},\ j=1,\ldots,m.$ We obtain directly from the resultant formula that
\[
|\Res(R,V)| = |a_m|^{n-dm} \prod_{j=1}^{m} |R(\zeta_j)| < |a_m|^{(k-d)m} r^{(k-d)m} = 1.
\]
Thus $\Res(R,V)=0$, and the polynomials $V$ and $R$ have a common root. Since $V$ is irreducible, we obtain that
$V\mid R$ in contradiction to our assumption.

\textit{Proof of (iii)} This proof is almost identical to that of case (ii), with one additional possibility that an extremal polynomial $P$ may be constant.
But if $P\equiv c$ then $c=\pm 1$, as $\|P\|_p = r^k = 1.$
\end{proof}

\begin{proof}[Proof of Proposition \ref{Llarge}]
As we already noted before the statement of this proposition, a combination of \eqref{sub} and \eqref{GMlb} gives \eqref{LpGMmin}. Indeed,
\[
\|P\|_p \ge M_L(P) \ge |c_n|\,\left(\frac{r}{|a_m|}\right)^{n/m} \ge \left(\frac{r}{|a_m|}\right)^{n/m} \ge 1,
\]
where $P(z)=c_n z^n + \ldots\in\Z[z],\ c_n\neq 0.$ Equality is always achieved in \eqref{LpGMmin} by $P=\pm 1.$ But if $n>0$ and $r/|a_m|>1$, then the above estimate shows that such a polynomial $P$ cannot be extremal for \eqref{LpGMmin}. Thus any extremal $P$ must be constant when $r/|a_m|>1$, and obviously $P=\pm 1.$
\end{proof}

\begin{proof}[Proof of Theorem \ref{GKr}]
Inequality \eqref{GMmon} follows from \eqref{GM} and  \eqref{GMlb} as was already explained before the statement of  this theorem.
In the case of equality in \eqref{GMmon}, we have under our assumptions that for $P(z)=c_n\prod_{k=1}^{n}(z-z_k) \in \Z[z]$
with $c_n\neq 0$ the generalized Mahler measure
\begin{align*}
  M_L(P) = |c_n| \left(\prod_{k=1}^{n} \max(1,|V(z_k)|)\right)^{1/m} = 1.
\end{align*}
The above equality holds if and only if $|c_n|=1$ and $|V(z_k)|\le 1,\ k=1,\ldots,n,$ which is equivalent to $\{z_k\}_{k=1}^n \subset E.$

Suppose now that $P(z) = \prod_{k=1}^{n}(z-\alpha_k) \in \Z[z]$ is irreducible, i.e., it is the minimal polynomial for a complete set of conjugate
algebraic integers $\{\alpha_k\}_{k=1}^n$. We showed that $M_L(P)=1$ if and only if $\{\alpha_k\}_{k=1}^n \subset E$, which is the same as
$|V(\alpha_k)|\le 1,\ k=1,\ldots,n.$ Define the monic polynomial $Q(w) = \prod_{k=1}^{n}(w-V(\alpha_k))$, and note that its coefficients are
symmetric polynomials in $\{\alpha_k\}_{k=1}^n$ with integer coefficients. Hence the coefficients of $Q$ are integer polynomials in elementary
symmetric functions of $\{\alpha_k\}_{k=1}^n$, and are ultimately integers. Thus $Q$ is a monic polynomial with integer coefficients and roots
$\{V(\alpha_k)\}_{k=1}^n$ located in the closed unit disk. Kronecker's theorem now implies that either $V(\alpha_k)=0$ (when $|V(\alpha_k)|<1$)
or $V(\alpha_k)$ is a root of unity (when $|V(\alpha_k)|=1$). In the first case $V(\alpha_k)=0$, we have that $\alpha_k$ is also a root of the irreducible monic
polynomial $P$, so that $P\mid V$ and $V(\alpha_k)=0$ for all $k=1,\ldots,n.$ In the second case $|V(\alpha_k)|=1$, there is a cyclotomic polynomial $\Phi$ such that
$\Phi(V(\alpha_k))=0,$ which means that $\Phi\circ V\in\Z[z]$ shares a root with the irreducible polynomial $P\in\Z[z]$. Hence we obtain that $P\mid \Phi\circ V$ and
$\Phi(V(\alpha_k))=0$ for all $k=1,\ldots,n.$ This completes the proof of Theorem \ref{GKr}, and also of part (ii) for Corollary \ref{AlgInt}.
\end{proof}

\begin{proof}[Proof of Corollary \ref{AlgInt}]
We start with part (i). If a complete set of conjugate algebraic integers $\{\alpha_k\}_{k=1}^n$ is located on $L$ then
\begin{align*}
  M_L(P) = \left(\prod_{k=1}^{n} \max(r,|V(\alpha_k)|)\right)^{1/m} = r^{n/m}
\end{align*}
for the minimal polynomial
$P(z) = \prod_{k=1}^{n}(z-\alpha_k).$ Since $V(z)$ is assumed monic, we obtain that
\[
|a_m|^n (M_L(P))^m = r^n < 1,
\]
and applying Lemma \ref{Reslem}, we conclude that $\Res(P,V)=0$ and  $P\mid V.$ It follows that all roots of $P$ coincide with some roots of $V$, and are located inside $L$.

Part (ii) was already established in the proof of Theorem \ref{GKr}.
\end{proof}

\begin{proof}[Proof of Theorem \ref{L-GL}]
If $Q(w)=b_n \prod_{k=1}^{n}(w-\beta_k) \in\Z[w]$ is any polynomial such that $M(Q)>1$, then we either have $|b_n|>1$ or $|b_n|=1$ and
$|\beta_j|>1$ for some $j,\ 1\le j\le n.$ Consider $P(z)=Q(V(z))\in\Z[z],\ \deg(P)=mn,$ with roots $\{\alpha_{i,k}\},\ i=1,\ldots,m,\ k=1,\ldots,n,$
that satisfy $V(\alpha_{i,k})=\beta_k,\ i=1,\ldots,m,$ for all $k=1,\ldots,n.$ Note that $|\beta_j|>1$ implies that $\alpha_{i,j} \not\in E$ for all
$i=1,\ldots,m.$ Thus we have
\begin{align*}
M_L(P) &= |b_n| \left(\prod_{k=1}^{n} \prod_{i=1}^{m}\max(1,|V(\alpha_{i,k})|)\right)^{1/m} \\
&= |b_n| \prod_{k=1}^{n} \max(1,|\beta_k|) = M(Q).
\end{align*}
It is clear now that $M_L(P)>1$, and so $M_L(P)\ge B_L$ by \eqref{GLehmer}. Hence $M(Q)\ge B_L$ and we obtain $B_\T \ge B_L$ by taking infimum over all $Q$.

Let $P(z) = c_n \prod_{k=1}^{n}(z-\alpha_k) \in \Z[z]$ be any polynomial such that $M_L(P)>1,$ which means that either $|c_n|>1$ or
$|c_n|=1$ and $\alpha_j \not\in E$ for some $j,\ 1\le j\le n,$ by Theorem \ref{GKr}. Consider $Q(w) = c_n^m \prod_{k=1}^{n}(w-V(\alpha_k))$ and
observe that the coefficients of $Q/c_n^m$ are symmetric polynomials in $\{\alpha_k\}_{k=1}^n$ with integer coefficients, whose degree in each
$\alpha_k$ does not exceed $m$. Hence these coefficients are integer polynomials in elementary symmetric functions of $\{\alpha_k\}_{k=1}^n$,
of degree at most $m$, and therefore they become integers after multiplying by $c_n^m.$ Thus $Q\in\Z[w].$ We also note that either $|c_n|>1$ or $|V(\alpha_j)|>1$
implies $M(Q)>1.$ It follows that
\[
M_L(P) = |c_n| \left(\prod_{k=1}^{n} \max(1,|V(\alpha_k)|)\right)^{1/m} = \left(M(Q)\right)^{1/m} \ge (B_\T)^{1/m} \ge 1.
\]
Thus we obtain $B_L \ge (B_\T)^{1/m}$ from the definition \eqref{GLehmer}.
\end{proof}

\subsection*{Acknowledgements}
This research was partially supported by NSF via the American Institute of
Mathematics, and by the Vaughn Foundation endowed Professorship in Number Theory.

\normalsize

\end{document}